\newcommand{\ep}{\varepsilon}
\newcommand{\bea}{\begin{eqnarray*}}
\newcommand{\eea}{\end{eqnarray*}}
\newtheorem{thm}{Theorem}[section]
\newtheorem{lem}{Lemma}[section]
\newtheorem{defn}{Definition}[section]
\title{The parameter-uniform convergence of a fitted operator method on non-uniform meshes for a singularly perturbed initial value problem}
\author{John J H Miller}
\affil{INCA-Institute for Numerical Computation and Analysis, Dublin, Ireland.\\email:  jjhmiller@gmail.com}
\date{}
\begin{document}
	\maketitle
	\begin{abstract}
The parameter-uniform convergence of a fitted operator method for a singularly perturbed differential equation is normally available only for uniform meshes. Here we establish the parameter-uniform convergence of a fitted operator method on a non-uniform mesh for a singularly perturbed initial value problem. This is obtained by a new method of proof.
	\end{abstract}
\section{Introduction} The original source of parameter-uniform convergence proofs for fitted operator finite difference methods is the paper of Il'in \cite{AMI}, which was used extensively in the monograph Doolan et al. \cite{DMS} and later by other authors. In these works the proofs of parameter-uniform convergence are restricted to a uniform finite difference mesh. The point of the present paper is to remove this restriction on the mesh by using an entirely new method of proof.
\section{The continuous problem} 
We consider the simple singularly perturbed first order initial value problem $(P_{\ep})$  on the  interval  $\Omega=(0,T]$  
\bea
\ep u_{\ep}^{\prime}  + a(t)u_{\ep}&=&f (t), \;\;t\in \Omega,\\
u_{\ep}(0) \;\;\mathrm{given},
\eea
where, for all $t \in \Omega,\;\;  a(t)\geq \alpha >0$ and $0 < \ep \leq 1$. \\
We are interested in designing  a numerical method which gives good approximations to the solution of $(P_{\ep})$, regardless of the value of $\ep$ in the entire range $0< \ep \leq 1$. It can be seen from the exact solution of this problem that as $\ep \rightarrow 0$ the gradient of the solution becomes increasingly steep in a neighbourhood of $t=0$. This phenomenon is called a {\it layer}. In this case it is an initial layer, because it is associated with a  boundary point where there is an initial condition. In the analysis that follows it becomes clear that the width of this layer is $O(\ep)$.  It is important to note that henceforth $C$ denotes a generic constant which is independent of the singular perturbation parameter $\ep$.\\
To analyse such problems and their numerical solutions we introduce some norms and semi-norms. For singular perturbation problems it is important to work in the maximum norm.
\noindent We define the maximum norm of a differentiable function on a set $S$ by
\begin{defn}
	$|\phi|_{S}=\sup_{t\in S}|\phi(t)|$
\end{defn}
\noindent and for any positive integer $k,$ we define the $k$-th order semi-norm of a differentiable function on a set $S$ by
\begin{defn}
	$|\phi|_{k,S}=\sup_{t\in S}|\phi^{(k)}(t)|.$
\end{defn}
\noindent For $k=0$, the semi-norm becomes the maximum norm.  Note that when the meaning is clear the subscript $S$ is usually dropped from the notation.\\
\noindent For some $T>0$, let $\Omega=(0,T]$ and $\overline{\Omega}=[0,T].$\\
For convenience we introduce the differential operator
\bea
L_{\ep}=\ep\frac{d}{dt}+a(t).
\eea
The operator ${L_{\ep}}$ satisfies the following maximum principle
\begin{lem} \label{max}
	Let $ \psi(t)$ be any function in the domain of ${L_{\ep}}$ such
	that $\psi(0)\ge 0.$ Then ${L_{\ep}}{\psi}(t) \geq 0$ for
	all $t \in \Omega$ implies that ${\psi}(t) \geq 0$ for all $t
	\in \overline{\Omega}$.
\end{lem}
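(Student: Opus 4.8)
The plan is to argue by contradiction, exploiting the strict positivity of $a(t)$ to control the undifferentiated term at an extremum. Since $\overline{\Omega}=[0,T]$ is compact and any function $\psi$ in the domain of $L_{\ep}$ is continuous, $\psi$ attains its minimum over $\overline{\Omega}$ at some point $t^{*}$. Suppose, for contradiction, that the conclusion fails, so that $\psi(t^{*})<0$.

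First I would eliminate the left endpoint: since $\psi(0)\ge 0>\psi(t^{*})$, the minimiser cannot be $t=0$, hence $t^{*}\in\Omega=(0,T]$. The argument then splits according to whether $t^{*}$ lies in the open interval or at the right endpoint. If $t^{*}\in(0,T)$ is an interior minimiser, then $\psi'(t^{*})=0$, so that $L_{\ep}\psi(t^{*})=\ep\,\psi'(t^{*})+a(t^{*})\psi(t^{*})=a(t^{*})\psi(t^{*})$. Because $a(t^{*})\ge\alpha>0$ and $\psi(t^{*})<0$, this quantity is strictly negative, contradicting the hypothesis $L_{\ep}\psi\ge 0$ on $\Omega$. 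If instead $t^{*}=T$, the minimum is attained at the right endpoint, so the one-sided derivative satisfies $\psi'(T)\le 0$; then $\ep\,\psi'(T)\le 0$ and $L_{\ep}\psi(T)=\ep\,\psi'(T)+a(T)\psi(T)\le a(T)\psi(T)<0$, again contradicting the hypothesis. Since both cases are impossible, $\psi(t^{*})\ge 0$, and therefore $\psi(t)\ge 0$ for every $t\in\overline{\Omega}$.

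The step I expect to be the main obstacle is the endpoint case $t^{*}=T$: unlike an interior extremum, the derivative need not vanish there, so I cannot simply drop the $\ep\,\psi'$ term. Instead I must use that a minimum attained at the right end forces the left-sided derivative to be non-positive, so that $\ep\,\psi'(T)$ carries the correct sign and can be discarded in the inequality. A cleaner alternative, should the extremum bookkeeping prove awkward, is to integrate directly: writing $L_{\ep}\psi=g\ge 0$ and using the integrating factor $\mu(t)=\exp\bigl(\tfrac{1}{\ep}\int_{0}^{t}a(s)\,ds\bigr)$ gives $\mu(t)\psi(t)=\psi(0)+\tfrac{1}{\ep}\int_{0}^{t}\mu(s)g(s)\,ds\ge 0$, and the strict positivity of $\mu$ then yields $\psi(t)\ge 0$ at once. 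I would favour the contradiction argument in the exposition, since it is robust and transfers directly to the discrete operator analysed later in the paper, whereas the integrating-factor identity relies on an explicit quadrature that has no obvious discrete analogue.
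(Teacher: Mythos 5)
Your proposal is correct and follows essentially the same route as the paper: a contradiction argument at the minimiser $t^{*}$, using $\psi(0)\ge 0$ to rule out $t^{*}=0$ and the sign of $\psi'(t^{*})$ together with $a(t^{*})\ge\alpha>0$ to force $L_{\ep}\psi(t^{*})<0$. The only difference is presentational: the paper subsumes your two cases into the single observation that $\psi'(t^{*})\le 0$ at any minimiser with $t^{*}\neq 0$, whether interior or at $t=T$.
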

\begin{proof} Let $t^*$ be such that $\psi(t^{*})=\min_{t}\psi(t)$
	and assume that the lemma is false. Then $\psi(t^{*})<0.$ 
	From the hypotheses we have $t^*\neq 0$ and
	$\psi^{\prime}(t^*)\leq 0$. Thus
	\bea
	L_{\ep}\psi(t^*)=\ep\psi^{\prime}(t^*)+a(t^*)\psi(t^*)<0,
	\eea
	which contradicts the assumption.
\end{proof}
\noindent This leads immediately to the following stability result.
\begin{lem}\label{stab}
	Let $ \psi(t)$ be any function in the domain of ${L_{\ep}}.$
	Then
	\bea
	|\psi(t)|\le \max\{|
	\psi(0)|,\frac{1}{\alpha}|L_{\ep}\psi|\}  \;\; t\in \overline{\Omega}
	\eea
\end{lem}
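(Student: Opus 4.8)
The plan is to derive this stability estimate directly from the maximum principle of Lemma \ref{max} by means of a pair of barrier functions. First I would set $M = \max\{|\psi(0)|, \tfrac{1}{\alpha}|L_{\ep}\psi|\}$, which is precisely the quantity on the right-hand side, and introduce the two auxiliary functions $\phi^{\pm}(t) = M \pm \psi(t)$. The point is that the two inequalities $\phi^{+}(t) \ge 0$ and $\phi^{-}(t) \ge 0$ together say exactly that $-M \le \psi(t) \le M$, i.e. $|\psi(t)| \le M$; so the whole task reduces to showing that both barriers are nonnegative on $\overline{\Omega}$. Note that $M \ge 0$ automatically, since it is a maximum of absolute values.

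Next I would verify that each $\phi^{\pm}$ satisfies the two hypotheses of Lemma \ref{max}. At the initial point, $\phi^{\pm}(0) = M \pm \psi(0) \ge |\psi(0)| \pm \psi(0) \ge 0$, so the initial condition is met. For the differential inequality, the constant $M$ is annihilated by the derivative, so $L_{\ep}\phi^{\pm} = a(t)M \pm L_{\ep}\psi$. Using $a(t) \ge \alpha > 0$ with $M \ge 0$ gives $a(t)M \ge \alpha M$, while the definition of $M$ gives $\alpha M \ge |L_{\ep}\psi| \ge |L_{\ep}\psi(t)| \ge \mp L_{\ep}\psi(t)$ pointwise. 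Combining these, $L_{\ep}\phi^{\pm}(t) \ge \alpha M \pm L_{\ep}\psi(t) \ge 0$ for all $t \in \Omega$.

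Having checked both hypotheses, Lemma \ref{max} applies to each barrier and yields $\phi^{\pm}(t) \ge 0$ throughout $\overline{\Omega}$, which is the claim. The argument is essentially mechanical once the barriers are chosen; the one step I would regard as the crux is the differential inequality, where it is essential to use the lower bound $a(t) \ge \alpha$ on the reaction coefficient in order to absorb the term $\pm L_{\ep}\psi$ into $a(t)M$. I would also emphasise that nothing in the estimate uses the size or sign of $\ep$, so the bound is genuinely parameter-uniform, which is exactly the property needed for the convergence analysis to follow.
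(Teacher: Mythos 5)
Your proof is correct and follows exactly the paper's own argument: the same barrier functions $M \pm \psi(t)$, the same verification of the hypotheses of Lemma \ref{max}, and the same conclusion. You simply spell out the details that the paper leaves as ``not hard to verify.''
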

\begin{proof}Define the two functions
	\[{\theta}^{\pm}(t)=\max\{|\psi(0)|,\frac{1}{\alpha}|L_{\ep}\psi|\}\pm
	{\psi}(t). \] It is not hard to
	verify that ${\theta}^{\pm}(0)\geq 0$ and
	${L_{\ep}}{\theta}^{\pm}(t)\geq 0$. It follows from Lemma
	\ref{max} that ${\theta}^{\pm}(t)\geq 0,$ for all $t \in \overline{\Omega},$ as required.
\end{proof}
In order to sharpen the classical bounds on the solution
we introduce the Shishkin decomposition of the problem.\\
First we define the reduced problem $(P_0)$ corresponding to $(P_{\ep})$ as 
\bea
a(t)u_0(t)=f(t).
\eea
This is obtained by putting $\ep=0$ in $(P_{\ep})$. Its solution is clearly
\[u_0(t)=\frac{f(t)}{a((t)}.\]
We then define the Shishkin decomposition of $u_{\ep}$. 
We write 
\bea
u_{\ep}=v_{\ep}+w_{\ep},
\eea
where $v_{\ep}$ is the smooth component of the decomposition and is defined to be the solution of the problem
\bea
L_{\ep} v_{\ep}=f,  \;\; v_{\ep}(0)=u_0 (0);
\eea
$u_0$ being the solution of the reduced problem. It follows that the singular component $w_{\ep}$ must be the solution of the problem
\bea
L_{\ep} w_{\ep}=0,  \;\; w_{\ep}(0)=u_{\ep} (0) - v_{\ep} (0).
\eea
The following observations are used in the proof of the next lemma.
Note that the equation for $v_{\ep}$ gives $\ep v_{\ep}^{\prime}(0)+a(0)v_{\ep}(0)=f(0)$ and the initial condition for  $v_{\ep}$ gives $a(0)v_{\ep}(0)=f(0)$. Combining these we obtain $\ep v_{\ep}^{\prime}(0)=0$ and so $v_{\ep}^{\prime}(0)=0$.\\ Note also that $L_{\ep}v_{\ep}^{\prime}=\ep v_{\ep}^{\prime\prime}+av_{\ep}^{\prime}$ and, by differentiating the equation satisfied by $v_{\ep}$, we get $\ep v_{\ep}^{\prime\prime}+(av_{\ep})^{\prime}=f^{\prime}$. Eliminating $v_{\ep}^{\prime\prime}$ gives $L_{\ep}v_{\ep}^{\prime}=f^{\prime}-a^{\prime}v_{\prime}$.\\
The sharper bounds are contained in the following lemma.
\begin{lem}\label{sharp}
	The components $v_{\ep},\; w_{\ep}$ of the exact solution $u_{\ep}$ satisfy the bounds
	\bea
	|v_{\ep}|_k &\leq& C, \;\; k=0, 1 \;\; \mathrm{and}\;\; |v_{\ep}|_2\leq C\ep^{-1},\\
	|w_{\ep}^{(k)}(t)| &\leq & C \ep^{-k} e^{-\frac{\alpha t}{\ep}}, \;\; k=0, 1, 2 \mathrm{ \;for\; all\;\;}  t \in \overline{\Omega}. 
	\eea
\end{lem}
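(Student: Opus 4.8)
The plan is to treat the smooth and singular components separately, exploiting the two auxiliary identities recorded just before the statement. For $v_{\ep}$ the idea is to apply the stability estimate of Lemma \ref{stab} not only to $v_{\ep}$ itself but also to its derivative $v_{\ep}'$, and then to read off the second derivative algebraically from the differential equation. For $w_{\ep}$ the idea is to dominate $w_{\ep}$ by the exponential barrier $Ce^{-\alpha t/\ep}$ using the maximum principle of Lemma \ref{max}, and then to obtain the derivative bounds by differentiating the homogeneous equation $L_{\ep}w_{\ep}=0$ and solving algebraically for the higher derivatives.

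For the bound $|v_{\ep}|_0\le C$ I would apply Lemma \ref{stab} directly: since $v_{\ep}(0)=u_0(0)=f(0)/a(0)$ and $L_{\ep}v_{\ep}=f$, both quantities inside the maximum are bounded by a constant because $a\ge\alpha>0$. For $|v_{\ep}|_1\le C$ I would apply Lemma \ref{stab} to $v_{\ep}'$, using the two observations $v_{\ep}'(0)=0$ and $L_{\ep}v_{\ep}'=f'-a'v_{\ep}$; the right-hand side is bounded by $C$ once the $k=0$ estimate is in hand, so Lemma \ref{stab} gives $|v_{\ep}'|\le\frac{1}{\alpha}|f'-a'v_{\ep}|\le C$. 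Finally, for $|v_{\ep}|_2\le C\ep^{-1}$ I would not use a stability estimate at all; instead I would solve the identity $\ep v_{\ep}''+av_{\ep}'=f'-a'v_{\ep}$ for $v_{\ep}''$, whence $|v_{\ep}''|\le\ep^{-1}\left(|f'|+|a'|\,|v_{\ep}|+|a|\,|v_{\ep}'|\right)\le C\ep^{-1}$ by the previous two bounds. The single factor of $\ep^{-1}$ appears precisely because the second derivative is recovered from a relation in which it is multiplied by $\ep$.

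For the singular component I would first bound $w_{\ep}$ at $t=0$: $|w_{\ep}(0)|=|u_{\ep}(0)-v_{\ep}(0)|\le C$ by the $k=0$ bound on $v_{\ep}$. Then I would introduce the barrier functions $\theta^{\pm}(t)=Ce^{-\alpha t/\ep}\pm w_{\ep}(t)$, with the constant chosen so that $C\ge|w_{\ep}(0)|$. A short computation gives $L_{\ep}(e^{-\alpha t/\ep})=(a(t)-\alpha)e^{-\alpha t/\ep}\ge 0$ because $a\ge\alpha$, so $L_{\ep}\theta^{\pm}\ge 0$, while $\theta^{\pm}(0)\ge 0$; Lemma \ref{max} then yields $\theta^{\pm}\ge 0$ and hence $|w_{\ep}(t)|\le Ce^{-\alpha t/\ep}$, which is the $k=0$ case. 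For the derivatives I would differentiate $L_{\ep}w_{\ep}=0$ and solve algebraically: $w_{\ep}'=-\ep^{-1}aw_{\ep}$ gives the $k=1$ bound directly, and differentiating once more, $\ep w_{\ep}''=-(a'w_{\ep}+aw_{\ep}')$ gives the $k=2$ bound, each differentiation contributing a further factor of $\ep^{-1}$.

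I expect the main obstacle to be the singular component rather than the smooth one. The derivative estimates for $v_{\ep}$ are essentially forced by the auxiliary identities, but for $w_{\ep}$ one must choose the exponential barrier with exactly the decay rate $\alpha/\ep$ so that $L_{\ep}$ applied to it stays nonnegative uniformly in $\ep$; a larger rate would destroy the sign of $L_{\ep}(e^{-\alpha t/\ep})$, while a smaller rate would fail to capture the true width $O(\ep)$ of the layer. Once the correct barrier is identified, the maximum principle does the rest, and the derivative bounds follow from the homogeneous equation with no further analytic input.
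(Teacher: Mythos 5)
Your proposal is correct and follows essentially the same route as the paper: stability/maximum-principle arguments for $v_{\ep}$ and $v_{\ep}'$ (the paper phrases the $v_{\ep}'$ step via explicit barrier functions $C(1+|f'|)\pm v_{\ep}'$, which is the same argument as applying Lemma \ref{stab}), the algebraic recovery of $v_{\ep}''$ from the differentiated equation, the barrier $Ce^{-\alpha t/\ep}\pm w_{\ep}$ for the layer component, and repeated use of the homogeneous equation for the derivatives of $w_{\ep}$. No substantive differences.
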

\noindent{\bf Proof} 
The bounds on $v_{\ep}$ and its derivatives follow from the above observations and Lemma \ref{stab}.\\ The bound on $v_{\ep}^{\prime}$ is obtained by considering the two functions $\phi^{\pm}(t)=C(1+|f^{\prime}|) \pm v_{\ep}^{\prime}(t).$ Then, 
$\phi^{\pm}(0)=C(1+|f^{\prime}|) \pm v_{\ep}^{\prime}(0)\ge v_{\ep}^{\prime}(0)=0$ . Furthermore, $L_{\ep}\phi^{\pm}(t)=a(t)C(1+|f^{\prime}|)\pm L_{\ep}v_{\ep}^{\prime}(t) \ge C\alpha (1+|f^{\prime}|) \pm (f^{\prime}-a^{\prime}v_{\prime}) \ge 0$. The required bound on $v_{\ep}^{\prime}$ follows from the bound on $v_{\ep}$ and the maximum principle Lemma \ref{max}. The bound on $v_{\ep}^{\prime \prime}$ is obtained from the equation $\ep v_{\ep}^{\prime\prime}+(av_{\ep})^{\prime}=f^{\prime}$ and the bounds on $v_{\ep}$ and $v_{\ep}^{\prime}$.

The bound on $w_{\ep}$ is obtained similarly. We introduce the functions
\bea
\psi^{\pm}(t)=Ce^{-\frac{\alpha t}{\ep}} \pm w_{\ep}(t),
\eea
where $C$ is a suitably large constant. Then 
\bea
\psi^{\pm}(0)=C \pm w_{\ep}(0)=C \pm (u_{\ep}(0)-v_{\ep}(0))\geq C-(|u_0|+ |v_0(0)|)\geq 0.
\eea
Also, 
\bea
L_{\ep} \psi^{\pm}(t)=CL_{\ep}  e^{-\frac{\alpha t}{\ep}} \pm L_{\ep}w_{\ep}(t)=CL_{\ep}  e^{-\frac{\alpha t}{\ep}} =(a(t)-\alpha)e^{-\frac{\alpha t}{\ep}}>0.
\eea
From the maximum principle we then have 
\bea
\psi^{\pm}(t)\geq 0 \;\; \mathrm{for \; all} \;\; t \in \overline{\Omega},
\eea
and so 
\bea
|w_{\ep}^{(k)}(t)| \leq  C \ep^{-k} e^{-\frac{\alpha t}{\ep}}  \;\; \mathrm{for \; all} \;\; t \in \overline{\Omega},
\eea
as required.\\
To bound the derivatives of $w_{\ep}$ we use the differential equation repeatedly. 
We have
\bea
w_{\ep}^{\prime}(t)=-\ep^{-1}a(t)w_{\ep}(t)
\eea
and so
\bea
|w_{\ep}^{\prime}(t)|=|\ep^{-1}a(t)w_{\ep}(t)|\leq C\ep^{-1}  e^{-\frac{\alpha t}{\ep}}.
\eea
Similarly
\bea
w_{\ep}^{\prime \prime}(t)=-\ep^{-1}(a(t)w_{\ep}^{\prime}(t)+a^{\prime}(t)w_{\ep}(t))
\eea
and so
\bea
|w_{\ep}^{\prime \prime}(t)|\leq C\ep^{-2}  e^{-\frac{\alpha t}{\ep}},
\eea
as required.\\
This completes the proof of this lemma.\\
\section{The discrete problem}
In order to discuss numerical solutions we discretise the domain $\Omega=(0,T]$ using a non-uniform mesh having $N$ sub-intervals of lengths $h_j$, which is determined by a set of $N+1$ points $\Omega^N=\{t_j\}_{j=0}^N$.  Here, $t_0=0, \;\; t_N=T$, and, for any $j, \; 1 \leq j \leq N, \;\; h_j=t_j-t_{j-1}.$
\\We now introduce the finite difference operators $ D^+_t \; \; D^{-}_t \;\;\delta^2_t$, where, on an arbitrary mesh  $\Omega^N = \{t_i\}_{i=0} ^{N}$,
\[D^+{U}(t_j)\;=\;\frac{{U}(t_{j+1})-{U}(t_j)}{t_{j+1}-t_j},\]
\[D^-{U}(t_j)\;=\;\frac{{U}(t_j)-{U}(t_{j-1})}{t_j-t_{j-1}}.\]
\[\delta^2{U}(t_j)\;=\;\frac{D^+{U}(t_j)-D^-{U}(t_j)}{(t_{j+1}-t_{j-1})/2}.\]
Using these we can define our fitted  backward Euler finite difference scheme on $\Omega^N$
\[\ep\sigma_j(\rho_j)  D^{-}{U^N} +a_j{U^N}=f_j,  \qquad {U^N}(0)={u}(0), \]
or  in operator form
\[{L}_{\sigma}^N {U^N} ={f},  \qquad  {U^N}(0)={u}(0), \] where
\[L^N_{\sigma}=\ep \sigma_j(\rho_j)  D^{-}+a_jI ,\]
$D^-$ is the backward difference operator and $\sigma_j(\rho_j)=a_j\rho_j/(e^{a_j\rho_j}-1)$ with $a_j=a(t_j)$ and $\rho_j=h_j/\ep$.\\
It is not hard to verify that for all $0<\rho_j <\infty$,  all $t_j\in \Omega^N$ and all positive numbers $p,\;q$,
\bea
0 < &\sigma_j(\rho_j)& <1,\\
|\sigma_j(\rho_j)-1| & \leq & \min\{1,C\rho_j\},\\
|e^{-p}-e^{-q}| & \leq & |p-q|e^{-\min\{p,q\}}.
\eea

We have the following discrete maximum principle for $L^N_{\sigma},$ analogous to the
continuous case.
\begin{lem}\label{dmax} 
	For any mesh function $\Psi^N$, the inequalities $
	{\Psi^N}(0)\;\ge\;0 \;\rm{and}\\
	\;{L_{\sigma}}^N
	{\Psi^N}(t_j)\ge 0\;$ for $1\;\le\;j\;\le\;N,\;$ imply
	that $\;\Psi^N(t_j)\ge 0\;$ for $0\;\le\;j\;\le\;N.\;$
\end{lem}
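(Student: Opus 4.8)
The plan is to mirror the argument used for the continuous maximum principle in Lemma \ref{max}, exploiting the two-point structure of the fitted backward-Euler operator. First I would choose an index $k$ at which the mesh function attains its minimum, so that $\Psi^N(t_k)=\min_{0\le j\le N}\Psi^N(t_j)$, and then argue by contradiction, assuming that the conclusion fails, i.e. that $\Psi^N(t_k)<0$.

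The next step is to rule out the boundary index. Since $\Psi^N(0)\ge 0$ by hypothesis while $\Psi^N(t_k)<0$, the minimising index must satisfy $k\ge 1$, so that the scheme is genuinely imposed at $t_k$ and the quantity $L_\sigma^N\Psi^N(t_k)$ is controlled by the hypothesis $L_\sigma^N\Psi^N(t_k)\ge 0$.

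The heart of the argument is a sign analysis of the two terms of
\[ L_\sigma^N\Psi^N(t_k)=\ep\,\sigma_k(\rho_k)\,\frac{\Psi^N(t_k)-\Psi^N(t_{k-1})}{h_k}+a_k\Psi^N(t_k). \]
Because $t_k$ is a minimiser we have $\Psi^N(t_k)\le\Psi^N(t_{k-1})$, so the backward difference $D^-\Psi^N(t_k)$ is nonpositive; combined with $\ep>0$ and the strict positivity $0<\sigma_k(\rho_k)<1$ recorded earlier, the first term is $\le 0$. The second term is strictly negative, since $a_k\ge\alpha>0$ and $\Psi^N(t_k)<0$. Hence $L_\sigma^N\Psi^N(t_k)<0$, contradicting the hypothesis, and therefore $\Psi^N(t_k)\ge 0$, which gives $\Psi^N(t_j)\ge 0$ for every $j$.

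I would expect the only point requiring care — the natural discrete analogue of the obstacle in the continuous proof — to be the justification that the difference term cannot spoil the sign: one must use that $D^-$ involves only $t_{k-1}$ and $t_k$ (so that no neighbour to the right of the minimiser enters, unlike for a centred scheme) and that the fitting factor $\sigma_k(\rho_k)$ remains positive for all $\rho_k\in(0,\infty)$. Since both facts are already available, and since the non-uniformity of the mesh enters only through the harmless positive factor $1/h_k$, the argument goes through essentially unchanged for an arbitrary mesh.
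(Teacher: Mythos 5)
Your proof is correct and follows essentially the same route as the paper: locate a minimising index, use the hypothesis at $t=0$ to exclude $k=0$, and derive a contradiction from the sign of the backward difference together with $a_k\Psi^N(t_k)<0$. The only cosmetic difference is that you write out the factor $\ep\sigma_k(\rho_k)/h_k$ explicitly (which the paper's displayed formula abbreviates), but the argument is identical.
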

\begin{proof} Let $ j^*$ be such that
	$\Psi^N(t_{j^{*}})=\min_{j}\Psi^N(t_j)$ and assume that the lemma
	is false. Then $\Psi^N(t_{j^{*}})<0$ . From the hypotheses we
	have $j^*\neq 0$ and $\Psi^N (t_{j^*})-\Psi^N(t_{j^*-1})\leq 0$.
	Thus
	\begin{eqnarray*}{L^N_{\sigma}}\Psi^N(t_{j^*})=
		\sigma_{j^*}\frac{\Psi^N (t_{j^*})-\Psi^N(t_{j^*-1})}{t_{j^*} -t_{j^*-1}}+
		a_{j^*}\Psi^N (t_{j^*})\leq a_{j^*}\Psi^N (t_{j^*})<0,
	\end{eqnarray*} which contradicts the assumption, as required. \end{proof}

An immediate consequence of this is the following discrete stability
result, analogous to the continuous result.
\begin{lem}\label{dstab}
	For any mesh function $\Psi^N $, we have
	\[|\Psi^N(t_j)|\;\le\;\max\{|\Psi^N (0)|,\frac{1}{\alpha}|L^N_{\ep} \Psi^N|\},\;\; 0\leq j \leq N. \]
\end{lem}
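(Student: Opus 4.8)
The plan is to mirror exactly the barrier-function argument used for the continuous stability result in Lemma \ref{stab}, replacing the continuous maximum principle by its discrete counterpart Lemma \ref{dmax}. Writing $M=\max\{|\Psi^N(0)|,\frac{1}{\alpha}|L^N_\sigma \Psi^N|\}$ for the quantity appearing on the right-hand side, I would introduce the two constant-shifted mesh functions
\[\Theta^{N,\pm}(t_j)=M\pm\Psi^N(t_j),\qquad 0\le j\le N,\]
and then verify the two hypotheses of Lemma \ref{dmax} for each of $\Theta^{N,+}$ and $\Theta^{N,-}$.

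First, at the initial node, $\Theta^{N,\pm}(0)=M\pm\Psi^N(0)\ge 0$, since by construction $M\ge|\Psi^N(0)|\ge\mp\Psi^N(0)$. Second, I would apply $L^N_\sigma$ to $\Theta^{N,\pm}$. The essential observation is that $M$ is a constant, so its backward difference $D^-M$ vanishes and the fitting factor $\sigma_j(\rho_j)$ plays no role; hence $L^N_\sigma[M]=\ep\sigma_j(\rho_j)D^-M+a_jM=a_jM$. Therefore
\[L^N_\sigma\Theta^{N,\pm}(t_j)=a_jM\pm L^N_\sigma\Psi^N(t_j).\]
Using $a_j\ge\alpha$ together with $\alpha M\ge|L^N_\sigma\Psi^N|\ge|L^N_\sigma\Psi^N(t_j)|\ge\mp L^N_\sigma\Psi^N(t_j)$, each right-hand side is nonnegative, so $L^N_\sigma\Theta^{N,\pm}(t_j)\ge 0$ for $1\le j\le N$.

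With both hypotheses verified, Lemma \ref{dmax} yields $\Theta^{N,\pm}(t_j)\ge 0$ for all $0\le j\le N$, that is $\mp\Psi^N(t_j)\le M$, which is precisely the claimed bound $|\Psi^N(t_j)|\le M$. I do not expect a genuine obstacle here, as the argument is the discrete transcription of Lemma \ref{stab} and everything reduces to the sign of $L^N_\sigma$ acting on a constant. The one point deserving care is exactly that computation: one must confirm that the fitting factor does not interfere, which it cannot, since $D^-$ annihilates constants while $a_j\ge\alpha>0$ keeps the zeroth-order term strictly positive. It is this positivity of $a_j$, rather than any property of $\sigma_j$, that drives the estimate, just as in the continuous case.
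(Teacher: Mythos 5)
Your proposal is correct and is essentially identical to the paper's own proof: the paper likewise defines the barrier mesh functions $\Theta^N_{\pm}(t_j)=\max\{|\Psi^N(0)|,\frac{1}{\alpha}|L^N_{\sigma}\Psi^N|\}\pm\Psi^N(t_j)$ and invokes the discrete maximum principle of Lemma \ref{dmax}. You have merely written out explicitly the verification that the paper leaves as ``not hard to verify,'' including the correct observation that $D^-$ annihilates the constant so the fitting factor plays no role.
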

\begin{proof} Define the two mesh functions
	\[{\Theta}^N_{\pm}(t)=\max\{|\Psi^N (0)|,\frac{1}{\alpha}|L_{\sigma}^N\Psi^N| \}\pm
	\Psi^N(t).\]
	It is not hard to verify that
	${\Theta}^N_{\pm}(0)\geq 0$ and
	${L_{\ep}^N}{\Theta}^N_{\pm}(t_j)\geq 0$. It follows from Lemma
	\ref{dmax} that ${\Theta}^N_{\pm}(t_j)\geq 0$ for all $0\leq j
	\leq N$.
\end{proof}
The Shishkin decomposition of the discrete solution is analogous to that of the exact solution. We have
\bea
U_{\ep}^N=V_{\ep}^N+W_{\ep}^N,
\eea
where $V^N_{\ep}$ is the smooth component of the decomposition and is defined to be the solution of the problem
\bea
L_{\sigma}^N V_{\ep}^N=f,  \;\; V^N_{\ep}(0)=v_{\ep} (0).
\eea
It follows that the singular component $W^N_{\ep}$ must be the solution of the problem
\bea
L_{\sigma} W^N_{\ep}=0,  \;\; W^N_{\ep}(0)=w_{\ep} (0).
\eea
\section{Parameter uniform convergence}
We now define what is meant by a parameter-uniform numerical method for a family of singular perturbation problems.
\begin{defn}
	Consider a family of problems $(P_{\ep})$ parameterised by the singular perturbation parameter $\ep,\;\;0< \ep \leq 1.$  Suppose that the exact solution  $u_{\ep}$ is approximated by the sequence of numerical solutions $\{ U^N_{\ep}\}_{N=1}^{\infty}$, defined on meshes $ \Omega^N$, where $N$ is the discretization parameter. Then, the numerical solutions $U_{\ep}^N$ are said to converge $\ep$-uniformly to the exact solution $u_{\ep}$, if there exists a positive integer $N_0$, and positive numbers $C$ and $p$, all independent of $N$ and $\ep$, such that, for all $N \geq N_0$,
	\bea
	\sup_{0<\ep \leq 1} |U_{\ep}^N-u_{\ep}|_{\Omega^N} \leq C N^{-p},
	\eea
	where $|.|_{\Omega^N}$ is the maximum norm on $\Omega ^N$.\\
\end{defn}
The following theorem provides a proof of parameter uniform first order convergence on a broad class of non-uniform meshes of the discrete solutions to the continuous solution in the maximum norm.
\begin{thm}
	Let the meshes $\Omega^N$ satisfy the condition $h_j<CN^{-1}$ for all $j.$ Then the numerical solutions $U_{\ep}^{N}$ of $(P_{\ep}^N)$ and the exact solution $u_{\ep}$ of $(P_{\ep})$  satisfy the following $\ep$-uniform error estimate, for all $N \geq 1$,
	\bea
	\sup_{0 < \ep \leq 1} |U_{\ep}^N - u_{\ep}|_{\Omega^N} \leq CN^{-1},
	\eea
	where $C$ is a constant independent of $\ep$ and $N$.
\end{thm}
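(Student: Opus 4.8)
The plan is to use the Shishkin decompositions of both the exact and the discrete solutions and to estimate the smooth and singular parts separately. Writing
\[
U_\ep^N - u_\ep = (V_\ep^N - v_\ep) + (W_\ep^N - w_\ep),
\]
I would bound each bracket in the maximum norm on $\Omega^N$ by $CN^{-1}$ and then conclude by the triangle inequality.

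For the smooth component, $V_\ep^N(0)=v_\ep(0)$ shows the difference vanishes at $t_0$, and since $L^N_\sigma V_\ep^N=f=L_\ep v_\ep$ the truncation error is $L^N_\sigma(V_\ep^N-v_\ep)(t_j)=\ep\bigl(v_\ep'(t_j)-\sigma_j D^-v_\ep(t_j)\bigr)$. I would split this as $\ep(v_\ep'-D^-v_\ep)+\ep(1-\sigma_j)D^-v_\ep$; a Taylor estimate together with $|v_\ep|_2\le C\ep^{-1}$ from Lemma \ref{sharp} controls the first term by $Ch_j$, while $|1-\sigma_j|\le C\rho_j=Ch_j/\ep$ together with $|v_\ep|_1\le C$ controls the second by $Ch_j$. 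Since $h_j<CN^{-1}$ the truncation error is $O(N^{-1})$, and the discrete stability result Lemma \ref{dstab} gives $|V_\ep^N-v_\ep|_{\Omega^N}\le CN^{-1}$.

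The singular component is the main obstacle and is where the new idea enters: a direct truncation-plus-stability estimate produces an uncontrolled factor $\ep^{-1}$ in the layer. Instead I would solve both problems explicitly. Because of the precise choice $\sigma_j=a_j\rho_j/(e^{a_j\rho_j}-1)$, the recurrence $\ep\sigma_j D^-W_\ep^N+a_jW_\ep^N=0$ collapses to $W_\ep^N(t_j)=W_\ep^N(t_{j-1})e^{-a_jh_j/\ep}$, whence
\[
W_\ep^N(t_j)=w_\ep(0)\exp\Bigl(-\tfrac1\ep\sum_{i=1}^j a_ih_i\Bigr),
\qquad
w_\ep(t_j)=w_\ep(0)\exp\Bigl(-\tfrac1\ep\int_0^{t_j}a(s)\,ds\Bigr),
\]
the second being the exact solution of $L_\ep w_\ep=0$. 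Thus both are $w_\ep(0)$ times a decaying exponential, and the discrete exponent is exactly a right-endpoint Riemann sum of the continuous one.

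Applying the inequality $|e^{-p}-e^{-q}|\le|p-q|e^{-\min\{p,q\}}$ with $p=\frac1\ep\int_0^{t_j}a$ and $q=\frac1\ep\sum_{i\le j}a_ih_i$, I would bound the quadrature error by $|p-q|\le\frac1\ep\sum_{i=1}^j\int_{t_{i-1}}^{t_i}|a(t_i)-a(s)|\,ds\le\frac{C}{\ep}\sum_{i=1}^j h_i^2\le\frac{C}{\ep}(\max_i h_i)\,t_j\le\frac{C}{\ep}N^{-1}t_j$, using the smoothness of $a$, and use $a\ge\alpha$ to get $\min\{p,q\}\ge\alpha t_j/\ep$. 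Combining gives $|W_\ep^N(t_j)-w_\ep(t_j)|\le CN^{-1}\bigl(\tfrac{t_j}{\ep}e^{-\alpha t_j/\ep}\bigr)$, and since $x\mapsto xe^{-\alpha x}$ is bounded on $[0,\infty)$ the bracket is $\le C$, so $|W_\ep^N-w_\ep|_{\Omega^N}\le CN^{-1}$ uniformly in $\ep$. The crux is precisely this closing step: the factor $t_j$ gained from the sharper quadrature bound $\sum h_i^2\le(\max_i h_i)t_j$ is exactly what the exponential decay needs to absorb the dangerous $\ep^{-1}$, and it is this mechanism, rather than any special layer-adapted mesh, that makes the argument work on an arbitrary fine mesh.
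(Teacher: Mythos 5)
Your proof is correct, and your treatment of the smooth component is essentially identical to the paper's (same truncation-error split $\ep(v_\ep'-D^-v_\ep)+\ep(1-\sigma_j)D^-v_\ep$, same use of Lemma \ref{sharp} and Lemma \ref{dstab}). For the singular component, however, you take a genuinely different route. The paper stays inside the truncation-error-plus-stability framework: it computes $L^N_\sigma(w_j-W_j)=\frac{a_j}{1-e^{-a_j\rho_j}}[w_j-e^{-a_j\rho_j}w_{j-1}]$, proves a local two-sided comparison lemma $w_{j-1}e^{-\rho_jA_j}\leq w_j\leq w_{j-1}e^{-\rho_j\alpha_j}$ via the continuous maximum principle on each cell, and then bounds the resulting difference of exponentials by $Ch_j$ before invoking discrete stability. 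You instead solve both problems in closed form, $W_\ep^N(t_j)=w_\ep(0)\exp(-\frac{1}{\ep}\sum_{i\le j}a_ih_i)$ and $w_\ep(t_j)=w_\ep(0)\exp(-\frac{1}{\ep}\int_0^{t_j}a)$, and estimate the difference directly as a quadrature error in the exponent, using $|e^{-p}-e^{-q}|\le|p-q|e^{-\min\{p,q\}}$, the bound $\sum_{i\le j}h_i^2\le(\max_ih_i)t_j$, and the boundedness of $xe^{-\alpha x}$. Your closing step is sound (the factor $t_j/\ep$ is indeed absorbed by $e^{-\alpha t_j/\ep}$), and your argument is arguably more transparent: it makes explicit that the fitting factor forces the discrete layer function to be the exact exponential with a right-endpoint Riemann sum in place of the integral, so the whole singular-component error is a quadrature error damped by the layer decay. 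What the paper's route buys in exchange is that it never needs the closed-form solutions: the local comparison lemma plus discrete stability is the mechanism that would survive in settings (reaction--diffusion boundary value problems, systems, variable-coefficient second-order operators) where explicit solution formulas are unavailable, which is presumably why the authors frame the ``new method of proof'' that way. One small caveat on your version: the quadrature estimate needs $a$ Lipschitz, but the paper implicitly assumes at least this much (it uses $a'$ and the bound $|A_j-\alpha_j|\le Ch$), so nothing is lost.
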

\begin{proof}
	First, the error is decomposed into smooth and singular components 
	\bea
	U_{\ep}^N -u_{\ep}=(V_{\ep}^N -v_{\ep})+(W_{\ep}^N -w_{\ep}).
	\eea
	which we bound separately. We write $\Omega_j=(t_{j-1},t_j).$
	For the smooth component we have
	\bea
	L^N_{\sigma} (V_j -v_j)=f_j-L^N_{\sigma}v_j=(L_{\ep}-L^N_{\sigma})v_j=\ep (\frac{d}{dt}-\sigma_j D^-)v_j=(\frac{d}{dt}-D^-)v_j+\ep(1-\sigma_j)D^- v_j.
	\eea
	Using Lemma \ref{sharp} it follows that, for all $j$,
	\bea
	|L^N_{\sigma} (V_j-v_j&|\leq&|\ep (\frac{d}{dt}-D^-)v_j|+|\ep(1-\sigma_j)D^- v_j|\\
	&\leq & C\ep(h_j|v|_{2,\bar{\Omega_j}}+\rho_j|v|_{1,\bar{\Omega_j}})\\
	&\leq &  Ch_j\\
	&\leq &  CN^{-1},
	\eea
	which is the required bound on the smooth component of the error.\\
	We now estimate of the singular component of the error.
	We have 
	\[
	L^N_\sigma(w_j-W_j)=L^N_\sigma w_j=(\ep\sigma_j D^{-}w_j+a_jw_j)
	=\frac{a_j}{1-e^{-a_j\rho_j}}[w_j-e^{-a_j\rho_j}w_{j-1}].
	\]
	\begin{lem}\label{tech} 
		Let \[A_j=\min_{\Omega_j}a(t),\;\; \alpha_j =\max_ {\Omega_j}a(t). \] Then,
		\[w_{j-1}e^{-\rho_j A_j}\leq w_j \leq w_{j-1}e^{-\rho_j\alpha_j}.\]
		\begin{proof}
			First we prove the inequality on the left hand side. For $t \in \bar{\Omega_j}$, we introduce the function \[\psi(t)=w(t)-w_{j-1}e^{-A_j (t-t_{j-1})/\ep}.\]
			Since $\psi(0)=0$ and $L_\ep \psi(t) \geq 0,$ the maximum principle for $L_\ep$ gives $\psi(t) \geq 0.$ Putting $t=t_j$ gives the desired result. 
			A similar proof gives the inequality on the right hand side.
		\end{proof}
	\end{lem}
	Using the inequalities in the above lemma,  it follows that
	\bea
	L^N_\sigma(w_j-W_j)\leq\frac{a_j w_{j-1}}{1-e^{-\rho_j a_j}}(e^{-\rho_j \alpha_j}-e^{-\rho_j a_j})\leq \frac{a_j w_{j-1}}{1-e^{-\rho_j a_j}}(e^{-\rho_j \alpha_j}-e^{-\rho_j A_j}),\\
	L^N_\sigma(w_j-W_j)\geq\frac{a_j w_{j-1}}{1-e^{-\rho_j a_j}}(e^{-\rho_j A_j}-e^{-\rho_j a_j})\geq -\frac{a_j w_{j-1}}{1-e^{-\rho_j a_j}}(e^{-\rho_j \alpha_j}-e^{-\rho_j A_j})
	\eea
	and so 
	\[|L^N_\sigma (w_j-W_j)| \leq \frac{a_j w_{j-1}}{1-e^{-\rho_j a_j}}(e^{-\rho_j \alpha_j}-e^{-\rho_j A_j}).\]
	Since $A_j-\alpha _j \leq Ch$ and, for any $p>0,\;\; pe^p \leq Ce^{-p/2},$ we have
	\bea
	|L^N_\sigma (w_j-W_j)| &\leq& \frac{C}{1-e^{-\rho_j \alpha_j}}|\rho_j \alpha_j-\rho_j A_j| e^{-\min(\rho_j \alpha_j -\rho_j A_j)}\\
	&\leq& \frac{C}{1-e^{-\rho_j \alpha_j}}\frac{(A_j-\alpha_j)}{\alpha_j}(\rho_je^{-\rho_j \alpha_j})\\
	&\leq& Ch_j\frac{e^{-\rho_j \alpha_j /2}}{1-e^{-\rho_j \alpha_j}}\\
	&\leq& Ch_j\\
	&\leq& CN^{-1}.
	\eea
	From the stability of $L^N_\sigma$ it follows that 
	\[|W_j-w_j|_{\bar\Omega^N} \leq CN^{-1},\]
	which is the required bound on the singular component of the error.
	The bound on the error is obtained by combining the above bounds on the smooth and singular components, which completes the proof of the theorem.
\end{proof}
\section*{Acknowledgment}The author is grateful to Eugene O'Riordan for his essential collaboration in the proof of the theorem in this paper.
	
\end{document}